\newtheorem{theorem}{Theorem}[section]
\newtheorem{proposition}[theorem]{Proposition}
\newtheorem{conjecture}[theorem]{Conjecture} 
\renewcommand{\a}{\alpha}
\renewcommand{\b}{\beta}
\newcommand{\f}{\frac}
\newcommand{\G}{\Gamma}
\renewcommand{\l}{\lambda}
\newcommand{\bea}{\begin{eqnarray}}
\newcommand{\eea}{\end{eqnarray}}
\newcommand{\bna}{\begin{eqnarray*}}
\newcommand{\ena}{\end{eqnarray*}}
\renewcommand{\th}{\theta}
\begin{document}

\title[The M\"obius disjointness conjecture for distal flows] 
{The M\"obius disjointness conjecture \\ for distal flows}  
\author{Jianya Liu \& Peter Sarnak}
\address{School of Mathematics
\\
Shandong University
\\
Jinan
\\
Shandong 250100
\\
China}
\email{jyliu@sdu.edu.cn}

\address{Department of Mathematics
\\
Princeton University \& Institute for Advanced Study
\\
Princeton, NJ 08544-1000       
\\
USA}
\email{sarnak@math.princeton.edu}

\date{\today}

\subjclass[2000]{11L03, 37A45, 11N37} 
\keywords{The M\"obius function, distal flow,  affine linear map, 
skew product, nilmanifold}

\maketitle

\addtocounter{footnote}{1}

\section{The M\"obius  disjointness conjecture} 
\setcounter{equation}{0}
Let $\mathscr{X}=(T, X)$ be a flow, namely $X$ is a compact topological space 
and $T: X\to X$ a continuous map. The sequence $\xi(n)$ 
is observed in $\mathscr{X}$ if there is an $f\in C(X)$ and an $x\in X$,  
such that $\xi(n)=f(T^n x)$. Let $\mu(n)$ be the M\"obius function, that 
is $\mu(n)$  is $0$ if $n$ is not square-free, and is $(-1)^t$ if  $n$ is a 
product of $t$ distinct primes. We say that $\mu$ is linearly 
disjoint from $\mathscr{X}$ if 
\begin{eqnarray}\label{def/DISJO}
\f{1}{N}\sum_{n\leq N} \mu(n)\xi(n) \to 0,  \quad \mbox{as } N\to\infty, 
\end{eqnarray}
for every observable $\xi$ of $\mathscr{X}$. 
The M\"obius Disjointness Conjecture of 
the second author  (\cite{Sar}, \cite{Sar1}) states the following.  

\begin{conjecture} [The M\"obius Disjointness Conjecture]
The M\"obius function $\mu$ is linearly disjoint from 
every $\mathscr{X}$ whose entropy is $0$.  
\end{conjecture}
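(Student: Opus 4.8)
The plan is to reduce the conjecture to bilinear correlation estimates by means of the disjointness criterion of Bourgain, Sarnak and Ziegler. Fix a zero-entropy flow $\mathscr{X}=(T,X)$, a point $x\in X$ and a function $f\in C(X)$ with $|f|\leq 1$, and set $\xi(n)=f(T^n x)$. The criterion asserts that to establish \eqref{def/DISJO} for this particular $\xi$ it suffices to control the self-correlations of $\xi$ along dilates by distinct primes: if the quantities
\[
\limsup_{N\to\infty}\f1N\Big|\sum_{n\leq N} f\big(T^{pn}x\big)\,\overline{f\big(T^{qn}x\big)}\Big|
\]
tend to $0$ as the distinct primes $p\neq q$ tend to infinity, then $\f1N\sum_{n\leq N}\mu(n)\xi(n)\to 0$. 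In this way the arithmetic content --- the multiplicativity of $\mu$ --- is spent once and for all, and the problem is transferred entirely to the dynamics: one must understand the joint behaviour of the two commuting maps $T^p$ and $T^q$ along the orbit of $x$.

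Second, I would bring the hypothesis of zero entropy to bear through the structure theory of the flow. Writing the correlation above as an average of $f\otimes\overline f$ along the orbit of $(x,x)$ under $T^p\times T^q$, the task becomes to show that this orbit equidistributes with respect to a joining under which $f\otimes\overline f$ integrates to a negligible quantity --- heuristically, that $T^p$ and $T^q$ become asymptotically independent. The Furstenberg--Zimmer structure theorem exhibits the (uniquely ergodic constituents of the) system as an inverse limit of a tower of extensions in which, the entropy being zero, no Kolmogorov or Bernoulli factor can appear: each step is either an isometric (compact group) extension or a relatively weakly mixing one. On the isometric layers the dynamics reduces to rotations and unipotent translations on homogeneous spaces, and the correlations collapse to Weyl-type exponential sums that can be bounded by the Vinogradov method, exactly as in the affine-linear and nilsequence cases; along this part of the tower the program goes through.

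The step I expect to be the main obstacle is the relatively weakly mixing, zero-entropy part of the tower. Its prototype is the horocycle flow, which is weakly mixing --- indeed mixing of all orders --- yet of zero entropy; there disjointness is known, but only through Ratner's measure classification and the equidistribution of unipotent orbits, an input entirely special to homogeneous dynamics. For an arbitrary weakly mixing zero-entropy extension no analogue of Ratner's theorem is available, and soft structure theory alone does not force the orbit of $(x,x)$ under $T^p\times T^q$ to equidistribute for the product measure: weak mixing of $T$ does not by itself yield joint ergodicity of the pair $(T^p,T^q)$, still less with the uniformity in $p$ and $q$ that the criterion demands. Controlling these bilinear correlations for a general weakly mixing factor is exactly the point at which the argument stalls, and it is why the conjecture remains open in full. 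A complete proof would require a new, system-independent mechanism --- an effective joinings or asymptotic-orthogonality statement for $(T^p,T^q)$ valid for every zero-entropy $\mathscr{X}$ --- to replace the Ratner-type classification that is presently the only known route through the weakly mixing case.
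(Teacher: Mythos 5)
The statement you were asked about is a \emph{conjecture}: the paper does not prove it, and indeed no proof exists. To its credit, your proposal recognizes this and does not claim otherwise; what you have written is a correct survey of the standard strategy (the Bourgain--Sarnak--Ziegler bilinear criterion reducing \eqref{def/DISJO} to correlations of $T^p$ against $T^q$, followed by structure theory) together with an accurate diagnosis of where it stalls, namely at relatively weakly mixing zero-entropy systems, where the only known success (horocycle flows) rests on Ratner theory, which has no analogue in general. That diagnosis matches the state of the art, and it is consistent with what the paper actually does: Theorems~\ref{thm1}--\ref{thm3} and Proposition~\ref{thm:2-2} establish the conjecture only for specific \emph{distal} flows --- affine maps of compact abelian groups, analytic skew products on ${\Bbb T}^2$, affine maps of nilmanifolds, and Furstenberg's examples --- i.e.\ precisely within the isometric part of the tower you describe, where no weakly mixing behavior occurs.

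Two corrections to your outline are worth recording, because even as a roadmap it overstates what is known. First, your claim that ``along this part of the tower the program goes through'' on the isometric layers is too optimistic: even for the most basic nonlinear distal building block, the Furstenberg skew product \eqref{def/SKEW}, the paper's Theorem~\ref{thm2} requires $h$ to be analytic and to satisfy the lower bound \eqref{hhat/LOW} on its Fourier coefficients --- an admittedly artificial hypothesis --- and the proof needs Poisson summation and stationary phase on top of the BSZ criterion, with all diophantine types of $\a$ treated separately. The isometric-extension case is therefore not closed in general; it is open already at the second level of Furstenberg's distal tower without such hypotheses. Second, your appeal to the Furstenberg--Zimmer structure theorem is a category error the conjecture does not permit: that theorem decomposes a system relative to a \emph{fixed invariant measure}, whereas \eqref{def/DISJO} is a topological, single-orbit statement required for \emph{every} $x\in X$, including points that are generic for no invariant measure --- this is exactly the regime of the irregular flows, failing \eqref{def/Birkh}, that the paper emphasizes. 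Any honest reduction must either control all invariant measures on the orbit closure simultaneously and uniformly, or work directly with the orbit as the paper does (note that Proposition~\ref{thm:2-2} obtains its rate $N\log^{-A}N$ by explicit continued-fraction and Fourier expansions of the orbit sums, never invoking an ergodic decomposition). So: no gap in your self-assessment that the conjecture remains open, but the two points above are gaps in the outline itself.
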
 

This Conjecture has been established for many flows 
$\mathscr{X}$ (see \cite{Dav}, \cite{MauRiv}, \cite{GreTao2}, \cite{BouSarZie}, \cite{Bou})   
however all of these flows are quasi-regular (or rigid) in the sense that 
the Birkhoff averages 
\begin{eqnarray}\label{def/Birkh}
\f{1}{N}\sum_{n\leq N} \xi(n) 
\end{eqnarray}
exist for every  $\xi$ observed in $\mathscr{X}$. 
In \cite{LiuSar} we establish some new cases of the Disjointness Conjecture and  
in particular for irregular flows $\mathscr{X}$, that is ones for which  (\ref{def/Birkh}) 
fails.  These flows are complicated in terms of the behavior of their individual 
orbits but they are distal and of zero entropy, so that the 
disjointness is still expected to hold. 

\section{Results}  
\setcounter{equation}{0}

In this section we summarize the results we have established 
in \cite{LiuSar}. The first result in \cite{LiuSar} 
is concerned with certain regular flows, namely affine linear 
maps  of a compact abelian group $X$.  Such a flow $(T, X)$ 
is given by 
\begin{eqnarray}\label{def/AFF}
T(x)=Ax+b
\end{eqnarray}
where $A$ is an automorphism of $X$ and $b\in X$ (see \cite{Hah},  \cite{HoaPar}).  

\begin{theorem}\label{thm1} 
Let $\mathscr{X}=(T, X)$ be an affine linear flow on a compact 
abelian group which is of zero entropy. Then $\mu$ 
is linearly disjoint from $\mathscr{X}$.  
\end{theorem}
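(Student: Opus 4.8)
The plan is to combine Pontryagin duality with the rigidity that zero entropy imposes on the dual automorphism, so as to reduce the claim to the classical disjointness of $\mu$ from polynomial phases. Throughout write $e(x)=e^{2\pi i x}$. First I would reduce to characters. Since $X$ is a compact abelian group, its characters $\chi\in\widehat X$ span a dense subalgebra of $C(X)$ by Stone--Weierstrass, and the left-hand side of (\ref{def/DISJO}) is linear in $f$ and bounded by $\sup_X|f|$ uniformly in $N$; hence it suffices to prove $\f1N\sum_{n\le N}\mu(n)\chi(T^nx)\to 0$ for each fixed $\chi$ and $x$. Writing $T^nx=A^nx+S_nb$ with $S_n=\sum_{j=0}^{n-1}A^j$, and letting $\alpha=\widehat A$ act on $\widehat X$ by $\alpha\chi=\chi\circ A$, one computes
\[
\chi(T^nx)=(\alpha^n\chi)(x)\prod_{j=0}^{n-1}(\alpha^j\chi)(b),
\]
so everything is controlled by the orbit $\{\alpha^n\chi\}$ inside the discrete group $\widehat X$.

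Next I would extract the arithmetic content of the entropy hypothesis. View $\widehat X$ as a module over $R=\Z[t^{\pm1}]$ with $t$ acting as $\alpha$, and let $M=R\cdot\chi$ be the cyclic submodule generated by $\chi$, so $M\cong R/\mathrm{Ann}(\chi)$. As an $\alpha$-invariant subgroup of $\widehat X$, $M$ is dual to a factor of $\mathscr X$, whence the algebraic flow $(\widehat M,\alpha)$ has zero entropy. If $\mathrm{Ann}(\chi)=0$ then $M\cong R$ and $\widehat M\cong\Bbb T^{\Z}$ with the shift, which has infinite entropy; so $\mathrm{Ann}(\chi)\neq 0$. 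By the Yuzvinskii--Lind--Schmidt--Ward theory the entropy of $(\widehat M,\alpha)$ is a sum of logarithmic Mahler measures attached to the prime filtration of $M$, and its vanishing forces each of these to be zero. By Kronecker's theorem the resulting integer polynomials are, up to monomials, products of cyclotomic polynomials, so their roots are roots of unity; equivalently $\alpha$ acts quasi-unipotently on $M$, i.e. $(\alpha^k-1)^D=0$ for some $k,D\ge1$. In particular $M$ is finitely generated over $\Z$, and its dual $\widehat M\cong\Bbb T^{r}\times F$ is a finite-dimensional compact abelian group on which the induced affine map $\bar T\bar y=\bar A\bar y+\bar b$ has quasi-unipotent linear part.

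On each residue class $n\equiv r\pmod k$ (enlarging $k$ to also be a period for the finite factor $F$), quasi-unipotence makes the orbit polynomial: writing $n=km+\rho$, the expansion $\bar A^{km}=(I+N)^m=\sum_{i<D}\binom{m}{i}N^i$ with $N$ nilpotent shows that the coordinates of $\bar T^n\bar x$ in $\Bbb T^r$ are polynomials in $n$ modulo $1$, and the same holds for the contribution of $S_nb$. Since $\chi$ is a character of $\Bbb T^r\times F$, this gives $\chi(T^nx)=e(P_\rho(n))$ for a real polynomial $P_\rho$. Detecting the congruence by additive characters modulo $k$ absorbs a further linear phase, so the claim reduces to
\[
\f1N\sum_{n\le N}\mu(n)\,e(P(n))\to 0
\]
for an arbitrary real polynomial $P$. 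For $\deg P\le 1$ this is Davenport's estimate \cite{Dav}, and for higher degree it follows from the disjointness of $\mu$ from nilsequences \cite{GreTao2}, of which polynomial phases are the simplest instances. Summing the finitely many $o(N)$ contributions over the residue classes modulo $k$ completes the argument.

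The hard part will be the second step: converting the purely dynamical hypothesis of zero entropy into the algebraic statement that $\alpha$ acts quasi-unipotently on the submodule generated by each character. This is precisely where the entropy hypothesis is used, and it rests on the identification of entropy with Mahler measure for algebraic $\Z$-actions together with Kronecker's theorem. The potential pathology of a general (possibly infinite-dimensional) compact abelian group is tamed here by passing to the finitely generated $R$-submodule $M=R\cdot\chi$ attached to a single character, which the zero-entropy hypothesis then forces to be finitely generated over $\Z$ with quasi-unipotent action, reducing each $\chi$ to a finite-dimensional polynomial problem.
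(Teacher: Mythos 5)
Your proof is correct and follows essentially the same route as the paper's: reduce by Stone--Weierstrass/Fourier analysis to a single character, use the zero-entropy hypothesis---via Pontryagin duality, the entropy--Mahler-measure formula and Kronecker's theorem---to reduce to a quasi-unipotent affine map on a finite-dimensional torus (times a finite group), so that $\chi(T^n x)$ becomes a polynomial phase along arithmetic progressions, and finish with bounds on $\sum_{n\le N}\mu(n)e(P(n))$. The only cosmetic difference is that you invoke the Green--Tao nilsequence theorem for the higher-degree polynomial phases where the paper uses Hua's classical estimates, which serve the same purpose and also give the rate $N\log^{-A}N$.
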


Theorem~\ref{thm1} actually holds with a rate of convergence.  
We first reduce to the torus case and then handle 
the torus case by Fourier analysis and classical results of Davenport \cite{Dav} 
and Hua \cite{Hua} on exponential sums concerning the 
M\"{o}bius function.  

The flows in Theorem~\ref{thm1} are distal, 
and our main 
result in \cite{LiuSar} is concerned with nonlinear distal flows on such spaces. 
We restrict to $X={\Bbb T}^2$ the two dimensional torus 
${\Bbb R}^2/{\Bbb Z}^2$ and consider nonlinear smooth (or even analytic) 
skew products as discussed in Furstenberg \cite{Fur61}. $T: {\Bbb T}^2\to {\Bbb T}^2$ 
is given by 
\begin{eqnarray}\label{def/SKEW}
T(x, y)= (ax+\a, cx+dy+ h(x))
\end{eqnarray}
where $a, c, d\in \Bbb Z, ad=\pm 1,  \a\in \Bbb R$ 
and $h$ is a smooth periodic function of period $1$. 
The affine 
linear part is in the form 
$$
\left[
\begin{array}{ccc}
a & 0\\
c & d
\end{array}
\right] \in GL_2 (\Bbb Z), 
$$
ensuring that $T$ has zero entropy (and it can always be brought into 
this form). The flow $(T, {\Bbb T}^2)$ is distal and this skew product 
is a basic building block (with $e(h(x))$ continuous) in Furstenberg's 
classification theory of minimal distal flows \cite{Fur63}. If 
$\a$ is diophantine, that is 
$$
\bigg|\a-\f{a}{q}\bigg|\geq \f{c}{q^B} 
$$
for some $c>0, B<\infty$ and all $a/q$ rational,  then $T$ can be conjugated 
by a smooth map of ${\Bbb T}^2$ to its affine linear part 
\begin{eqnarray}\label{DIO/conjugate}
(x, y)\mapsto (ax+\a, cx+dy+ \b)
\end{eqnarray}
where 
$$
\b=\int_0^1 h(x)dx
$$ 
(see \cite{SanUrz}). 
Hence the disjointness of $\mu$ from 
$\mathscr{X}=(T, {\Bbb T}^2)$ for a $T$ with a diophantine 
$\a$, follows from Theorem~\ref{thm1}.  
However if $\a$ is not diophantine the dynamics of the 
flow $(T, {\Bbb T}^2)$ can be very different from an affine 
linear flow. For example, as Furstenberg shows it may be 
irregular (i.e. the limits in (\ref{def/Birkh}) fail to exist  for certain observables). 
Our main result is that these nonlinear skew products 
are linearly disjoint from $\mu$, at least if $h$ satisfies some further 
technical hypothesis. Firstly we assume that $h$ is analytic, namely that 
if 
\begin{eqnarray}\label{h=/FOUR}
h(x)=\sum_{m\in \Bbb Z} \hat{h}(m) e(mx)
\end{eqnarray}
then 
\begin{eqnarray}\label{hhat/UPP}
\hat{h}(m)\ll e^{-\tau |m|}
\end{eqnarray}
for some $\tau>0$. Secondly we assume that there is 
$\tau_2<\infty$ such that 
\begin{eqnarray}\label{hhat/LOW}
|\hat{h}(m)|\gg e^{-\tau_2 |m|}.  
\end{eqnarray}
This is not a very natural condition being an artifact of our proof. 
However it is not too restrictive and the following applies rather 
generally (and most importantly there is no condition on $\a$). 

\begin{theorem}\label{thm2} 
Let $\mathscr{X}=(T, {\Bbb T}^2)$ be of the form (\ref{def/SKEW}), 
with $h$ satisfying (\ref{hhat/UPP}) and (\ref{hhat/LOW}). 
Then $\mu$ is linearly disjoint from $\mathscr{X}$.  
\end{theorem}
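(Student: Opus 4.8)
The plan is to reduce the statement, via Fourier analysis on $\Bbb{T}^2$ and the multiplicative structure of $\mu$, to bounds on M\"obius sums against real polynomial phases of degree at most two, which are precisely the inputs (Davenport and Hua) already used for Theorem~\ref{thm1}. Since $a,d\in\Bbb{Z}$ with $ad=\pm1$ forces $a,d\in\{\pm1\}$, and the cases $a=-1$ or $d=-1$ reduce to the principal case $a=d=1$ by passing to $T^2$ and separating $n$ into residues, I would first treat $T(x,y)=(x+\alpha,\,cx+y+h(x))$. By linearity and density of trigonometric polynomials it suffices to take $f(x,y)=e(\ell x+my)$. Iterating, the first coordinate of $T^n(x,y)$ is $x+n\alpha$, while the second is
\[
y_n=y+cnx+c\alpha\binom{n}{2}+H_n(x),\qquad H_n(x)=\sum_{k=0}^{n-1}h(x+k\alpha).
\]
Thus the observable becomes $e(P(n))\,e(mH_n(x))$, where $P(n)$ is a real quadratic polynomial in $n$ whose coefficients depend on $\ell,m,c,x,\alpha$. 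The term $e(P(n))$ is harmless: any M\"obius sum against it is $o(N)$ uniformly in the coefficients by the Davenport--Hua estimates. All the difficulty is concentrated in the cocycle phase $e(mH_n(x))$.

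The structural identity I would exploit is the formal coboundary
\[
H_n(x)=\hat h(0)\,n+\psi(x+n\alpha)-\psi(x),\qquad \psi(z)=\sum_{j\neq0}\f{\hat h(j)}{e(j\alpha)-1}\,e(jz),
\]
which is just the cohomological equation $h-\hat h(0)=\psi(\cdot+\alpha)-\psi$ read off coefficient by coefficient. When $\alpha$ is diophantine the exponential decay (\ref{hhat/UPP}) beats the small divisors $e(j\alpha)-1$, so $\psi$ is a genuine smooth function, the flow is smoothly conjugate to its affine part (\ref{DIO/conjugate}), and we are done by Theorem~\ref{thm1}. Hence I may assume $\alpha$ is non-diophantine, i.e. Liouville, and the series for $\psi$ need not converge. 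I would therefore truncate: using (\ref{hhat/UPP}) to discard the frequencies $|j|>J$ with $J\asymp\log N$ (the tail costs $\ll N^2 e^{-\tau J}=o(N)$ in the full sum), and absorbing the linear part $m\hat h(0)n$ into $P(n)$, I replace $mH_n(x)$ by $\Theta_J(n)=\sum_{0<|j|\leq J}c_j\,e(jn\alpha)$ up to an additive constant, with $c_j=m\,\hat h(j)\,e(jx)/(e(j\alpha)-1)$. Writing $e(\Theta_J(n))=F(\{n\alpha\})$ for a function $F$ on the circle and expanding $F=\sum_r\widehat F(r)\,e(r\,\cdot)$, the sum collapses to
\[
\sum_{n\leq N}\mu(n)\,e(P(n))\,F(\{n\alpha\})=\sum_r\widehat F(r)\sum_{n\leq N}\mu(n)\,e\big(P(n)+rn\alpha\big),
\]
and each inner sum is again a M\"obius sum against a quadratic polynomial phase, hence $\ll_A N(\log N)^{-A}$ uniformly. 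This proves the theorem provided the Wiener norm $\sum_r|\widehat F(r)|$ grows slower than any power of $\log N$.

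The main obstacle is precisely this proviso, and it is where the continued fraction structure of $\alpha$ and the lower bound (\ref{hhat/LOW}) enter. One has $\sum_r|\widehat F(r)|\leq\exp\big(2\pi\sum_{0<|j|\leq J}|c_j|\big)$ with $|c_j|\ll|m|\,e^{-\tau|j|}/\|j\alpha\|$, where $\|\cdot\|$ denotes distance to the nearest integer; a single resonant $j^\ast$ with $\|j^\ast\alpha\|$ exponentially small in $|j^\ast|$ --- which for Liouville $\alpha$ does occur, at multiples of the continued fraction denominators $q_k\leq J$ --- makes $|c_{j^\ast}|$, and hence the naive Wiener norm, too large. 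The resolution I would pursue is to isolate the at most one dominant resonant frequency at the scale $N$ and treat it separately from the genuinely non-resonant frequencies, for which $\|j\alpha\|$ is bounded below in terms of $q_k$ and the Fourier--Davenport argument above applies verbatim. For the resonant frequency, writing $q_k\alpha=p_k+\delta_k$ with $\delta_k=\pm\|q_k\alpha\|$, the relevant partial sum is $\sum_{i<n}e(i\delta_k)$, which over the range $n\leq N\lesssim 1/\delta_k$ is within $o(1)$ of being linear in $n$; it therefore contributes, after truncation, essentially another linear term to $P(n)$ and is again absorbed into the polynomial phase handled by Davenport--Hua. The role of the lower bound (\ref{hhat/LOW}) is to guarantee that these isolated resonant coefficients $\hat h(q_k)$ are genuinely nonzero of a controlled size, so that the splitting into a resonant (linear, polynomial-type) part and a non-resonant (bounded Wiener norm) part is clean and uniform in $\alpha$. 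Balancing the truncation level $J\asymp\log N$ against the continued fraction denominators of $\alpha$, and carrying this out uniformly over all (in particular Liouville) $\alpha$, is the technical heart of the argument.
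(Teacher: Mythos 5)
Your route is genuinely different from the paper's, and it has a gap at exactly the point you call the technical heart. The paper proves Theorem~\ref{thm2} by the Bourgain--Sarnak--Ziegler finite (bilinear) form of Vinogradov's method, combined with Poisson summation and stationary phase; it does not attempt the Fourier-expansion/Wiener-norm/Davenport--Hua argument you propose for general $h$. That argument is exactly what the paper carries out for Furstenberg's special systems (Proposition~\ref{thm:2-2}), and it succeeds there only because those $h$'s are built with $\hat{h}(q_k)=c_k(1-e(q_k\alpha))$: the small divisor $e(q_k\alpha)-1$ in the coboundary cancels identically, so the coefficients of the relevant circle function stay bounded and the Wiener norm is under control. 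Under (\ref{hhat/UPP})--(\ref{hhat/LOW}) alone there is no such cancellation, and your accounting of the resonances does not close.

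Concretely, your budget is $\sum_j |c_j|\ll \log\log N$ (since the final bound is $e^{2\pi\sum_j|c_j|}\,N\log^{-A}N$), and your dichotomy (non-resonant frequencies go into the Wiener norm, the one resonant frequency is ``essentially linear in $n$'') fails in the intermediate regime. Take $\tau<1$, say $\hat{h}(j)=e^{-|j|/2}$, and $\alpha$ of tower type $q_{i+1}\approx e^{q_i}$ (precisely the Furstenberg regime). With $q_k\asymp \log N$, hence $q_{k-1}\asymp\log\log N$, one has $\|q_{k-1}\alpha\|\asymp 1/q_k\asymp 1/\log N$, so $|c_{q_{k-1}}|\asymp |\hat{h}(q_{k-1})|\,q_k\asymp(\log N)^{1/2}$: the Wiener norm is then at least $e^{c(\log N)^{1/2}}$, which beats every power of $\log N$. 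At the same time this frequency cannot be handled by your resonant treatment: $\sum_{i<n}e(i\delta)=n+O(n^2|\delta|)$, so linearity-within-$o(1)$ requires $n\ll|\delta|^{-1/2}$, not $n\leq N\lesssim|\delta|^{-1}$; here $|\delta|\asymp 1/\log N$, and the term $\hat{h}(q_{k-1})e(q_{k-1}x)\frac{e(n\delta)-1}{e(\delta)-1}$ oscillates $\approx N/\log N$ times over $n\leq N$ with amplitude $(\log N)^{1/2}$ --- it is neither small, nor slowly varying, nor a polynomial of bounded degree in $n$. Moreover there are $\sim\log^{*}N$ such frequencies (all the $q_i\leq\log N$ and their small multiples), not ``at most one.'' The same failure occurs for $q_k$ itself when $q_{k+1}\asymp N$. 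Two further points: the role you assign to (\ref{hhat/LOW}) is backwards --- a lower bound on $|\hat{h}(q_k)|$ makes $|c_{q_k}|\gg e^{-\tau_2 q_k}q_{k+1}$ \emph{larger}, i.e.\ it worsens your resonances; in the actual proof it serves to guarantee that certain phases are genuinely oscillatory so that stationary phase applies. And your reduction of $d=-1$ to $d=1$ via $T^2$ replaces $\hat{h}(j)$ by $\hat{h}(j)(e(j\alpha)-1)$, which destroys hypothesis (\ref{hhat/LOW}) at the resonant $j$, so even that preliminary step is not benign. A soft confirmation that the method cannot work as proposed: it would yield the rate $N\log^{-A}N$, whereas the paper emphasizes that Theorem~\ref{thm2} comes with no rate in general, a rate being available only for the special systems of Proposition~\ref{thm:2-2}.
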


The assertion of 
Thereom~\ref{thm2} holds for all $\a$, and so we have to consider all 
diophantine possibilities of $\a$. The tool is the Bourgain-Sarnak-Ziegler 
\cite{BouSarZie}  
finite version of the Vinogradov method, incorporated with various analytic methods 
such as Poisson's summation and stationary phase. 
Furstenberg \cite{Fur61} gives examples of skew product transformations 
of the form (\ref{def/SKEW}) which are not regular in the sense of (\ref{def/Birkh}). 
Many of the flows ${\mathscr X}$ 
in Theorem~\ref{thm2} have this property and we show in \cite{LiuSar} 
that Furstenberg's 
examples are smoothly conjugate to such ${\mathscr X}$'s. In particular his examples are linearly 
disjoint from $\mu$.  

Theorem~\ref{thm1} deals with the affine 
linear distal flows on the $n$-torus. A different source 
of  homogeneous distal flows are the affine linear flows on nilmanifold 
$X=G/\G$ where $G$ is a nilpotent Lie group and 
$\G$ a lattice in $G$.  For 
$\mathscr{X}=(T, G/\G)$ where $T(x)=\a x\G$  
with $\a\in G$, i.e. translation on $G/\G$, the 
linear disjointness of $\mu$ and $\mathscr{X}$ is proven in 
\cite{GreTao1} and \cite{GreTao2}. Using the classification of zero entropy (equivalently distal) 
affine linear flows on nilmanifolds \cite{Dan}, and 
Green and Tao's results we establish 
in \cite{LiuSar} the following. 

\begin{theorem}\label{thm3} 
Let $\mathscr{X}=(T, G/\G)$ where $T$ is an affine 
linear map of the nilmanifold  $G/\G$ of zero entropy. 
Then $\mu$ is linearly disjoint from $\mathscr{X}$.  
\end{theorem}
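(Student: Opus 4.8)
The plan is to combine Danilenko's structure theory \cite{Dan} with Green and Tao's disjointness for nilsequences \cite{GreTao1,GreTao2}. First I would record the shape of an affine linear map of the nilmanifold: it is $T(g\G)=(A(g)\,a)\G$ for an automorphism $A$ of $G$ with $A(\G)=\G$ and an element $a\in G$, so that iterating gives the explicit orbit $T^n(g\G)=\bigl(A^n(g)\,A^{n-1}(a)\cdots A(a)\,a\bigr)\G$ and an observable $\xi(n)=f(T^n(g\G))$ is read off along it. The topological entropy of $T$ is $\sum_{|\l_i|>1}\log^+|\l_i|$, where the $\l_i$ are the eigenvalues of the induced automorphism of the Lie algebra $\fg$ (the translation part contributing nothing). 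Since $A$ preserves the lattice $\G$ these $\l_i$ are algebraic integers whose product has modulus one, so the zero-entropy hypothesis forces every $|\l_i|=1$; by Kronecker's theorem the $\l_i$ are then roots of unity, i.e. $A$ is quasi-unipotent and $\mathscr{X}$ is distal. This is exactly the classification input of \cite{Dan}.

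The next step is to present $\mathscr{X}$ as a nilsystem. Passing to a suitable power, set $S=T^k$ so that its automorphism part $B=A^k$ is unipotent, writing $S(g\G)=(B(g)\,b)\G$; the accompanying restriction $n\equiv r\pmod k$ is harmless and will be reincorporated at the end. For unipotent $B$ the semidirect product $\tilde G=G\rtimes_B\Z$ is nilpotent, $\tilde\G=\G\rtimes_B\Z$ is a cocompact lattice, and $\tilde G/\tilde\G$ is a nilmanifold. With $\tilde b=(b,1)\in\tilde G$ and $\tilde x=(g,0)$, the discrete orbit $\tilde b^{\,n}\tilde x\,\tilde\G$ projects under the natural map $\tilde G/\tilde\G\to G/\G$ to $S^n(g\G)=T^{kn}(g\G)$, so $f(S^n(g\G))=F(\tilde b^{\,n}\tilde x\,\tilde\G)$ for a continuous $F$ on $\tilde G/\tilde\G$. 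Thus every observable of $(S,G/\G)$ is the restriction of a linear nilsequence; equivalently, one may observe directly that $n\mapsto T^n(g\G)$ is the projection of a polynomial orbit on a nilmanifold, so that Green and Tao's polynomial version applies without the suspension.

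I would then invoke the theorem of Green and Tao that $\mu$ is asymptotically orthogonal to nilsequences: for Lipschitz $F$ one has $\f{1}{N}\sum_{n\leq N}\mu(n)F(\tilde b^{\,n}\tilde x\,\tilde\G)\ll_C(\log N)^{-C}$, uniformly over $\tilde x$. Since continuous functions on the compact nilmanifold $\tilde G/\tilde\G$ are uniform limits of Lipschitz ones and $\mu$ is bounded, the same bound holds for every $F\in C(\tilde G/\tilde\G)$, hence for every observable of $(S,G/\G)$. Finally I would undo the passage to the power $k$: splitting $n\leq N$ into residues modulo $k$ and encoding the condition $n\equiv r\pmod k$ through the additive characters $e(jn/k)$, which are themselves nilsequences on ${\Bbb R}/{\Bbb Z}$, expresses $\f{1}{N}\sum_{n\leq N}\mu(n)f(T^n(g\G))$ as a finite combination of twisted nilsequence sums to which the orthogonality again applies, yielding (\ref{def/DISJO}).

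The main obstacle is structural rather than analytic: one must extract from Danilenko's classification a presentation of the zero-entropy affine flow as a nilsystem that is a genuine \emph{topological} conjugacy, so that continuous observables $f$ and arbitrary base points $x$ transfer, and then verify that after passing to the unipotent power the suspended group $\tilde G=G\rtimes_B\Z$ is nilpotent with $\tilde\G$ cocompact. Once the orbit $n\mapsto T^n(g\G)$ is exhibited as the projection of a polynomial orbit on a nilmanifold, possibly twisted by a finite-order character arising from the semisimple part of $A$, the disjointness follows at once from Green and Tao, the required uniformity in the base point being built into their estimate.
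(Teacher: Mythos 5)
Your proposal is correct and follows essentially the same route the paper indicates for Theorem~\ref{thm3}: the classification input from \cite{Dan} (note that \cite{Dan} is S.~G.~Dani, not Danilenko) showing zero entropy forces a quasi-unipotent automorphism part, passage to a power with unipotent part, realization of the resulting affine flow as the projection of a nil-orbit via the semidirect-product suspension, and Green--Tao's orthogonality of $\mu$ to nilsequences \cite{GreTao1}, \cite{GreTao2}, with the power undone by splitting into progressions and twisting by additive characters. The one technical refinement is to suspend over ${\Bbb R}$ rather than $\Z$ (embedding the unipotent $B$ in the one-parameter group $B^t=\exp(t\log B)$), since Green and Tao work with connected, simply connected nilpotent Lie groups --- a standard fix that unipotence makes available and that does not change the substance of your argument.
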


The above results for $\mu(n)$ can be 
proved in the same way for similar multiplicative functions such as 
$\l(n)=(-1)^{\tau(n)}$ where $\tau(n)$ is the number of 
prime factors of $n$.

\section{Disjointness of $\mu$ from Furstenberg's systems} 
\setcounter{equation}{0}

As a consequence of Theorem~\ref{thm2}, 
it is proved in \cite{LiuSar} that $\mu$ is linearly 
disjoint from Furstenberg's systems. 
But no rate of convergence is obtained there 
since Theorem~\ref{thm2} in general offers no rate. 
In this 
section we show that, for Furstenberg's systems, rate of convergence is actually 
available if we work on these systems directly rather than appeal to  
Theorem~\ref{thm2}.    

\subsection{The continued fraction expansion of $\a$.} 
We assume that $\a$ is irrational, and our argument will 
depend on the continued fraction expansion of $\a$. 
Every real number $\a$ has its continued fraction 
representation 
\begin{eqnarray}\label{a=a0a1+}
\a=a_0+\f{1}{a_1+\f{1}{a_2+\cdots}}
\end{eqnarray}
where $a_0=[\a]$ is the integral part of $\a$, 
and $a_1, a_2, \ldots$ are positive integers. 
The expression (\ref{a=a0a1+}) is infinite since $\a\not\in \Bbb Q$.   
We write $[a_0; a_1, a_2, \ldots]$ for the 
expression on the right-hand side of (\ref{a=a0a1+}), which is the limit of 
the finite continued expressions 
\begin{eqnarray}\label{a=a0a1F}
[a_0; a_1, a_2, \ldots, a_k]=a_0+\f{1}{a_1+\f{1}{a_2+\cdots+\f{1}{a_k}}}
\end{eqnarray}
as $k\to \infty$. 
Writing 
\begin{eqnarray*}
\f{l_k}{q_k}=[a_0; a_1, a_2, \ldots, a_k], 
\end{eqnarray*}
we have $l_0=a_0, l_1=a_0a_1+1, q_0=1, q_1=a_1, $ 
and for $k\geq 2$, 
\begin{eqnarray*}
l_k=a_k l_{k-1}+l_{k-2}, \quad 
q_k=a_k q_{k-1}+q_{k-2}.  
\end{eqnarray*}
Since $\a$ is irrational we have $q_{k+1}\geq q_k+1$ for all $k\geq 1$. 
An induction argument gives the stronger 
assertion that 
$q_k\geq 2^{(k-1)/2}$ 
for all $k\geq 2$, and thus $q_k$ increases at least like an exponential function of $k$.  
The irrationality of $\a$ also implies that, for all $k\geq 2$, 
\begin{eqnarray}\label{ratAPP}
\f{1}{2 q_k q_{k+1}}<\bigg|\a-\f{l_k}{q_k}\bigg| <\f{1}{q_kq_{k+1}}. 
\end{eqnarray} 

\subsection{Furstenberg's examples.}  
Furstenberg \cite{Fur61} 
gave examples of smooth transformation $T: {\Bbb T}^2\to {\Bbb T}^2$ 
such that the ergodic averages do not all exist. 
Let $\a$ be as above such that 
\begin{eqnarray}\label{qk+1>eqk}
q_{k+1}\geq e^{q_k} 
\end{eqnarray} 
for all positive $k$.  
Define $q_{-k}= - q_k$ and set   
\begin{eqnarray}
h(x)=\sum_{k\not=0}\f{e(q_k\a)-1}{|k|} e(q_k x). 
\end{eqnarray}
It follows from (\ref{ratAPP}) and (\ref{qk+1>eqk}) that 
$h(x)$ is a smooth function. We also have 
$h(x)=g(x+\a)-g(x)$ 
where 
\begin{eqnarray}
g(x)=\sum_{k\not=0}\f{1}{|k|}e(q_k x)
\end{eqnarray}
so that $g(x)\in L^2(0,1)$ and in particular   
defines and measurable function. But $g(x)$ cannot 
correspond to a continuous function, as shown in Furstenberg \cite{Fur61}.

\subsection{Disjointness of $\mu$ from Furstenberg's systems.}    
In the following we consider slightly more general $h$'s with  
\begin{eqnarray}\label{def/hx=sum}
h(x)=\sum_{k\not=0} c_k (1-e(q_k\a))e(q_k x)
\end{eqnarray}
where $\a$ satisfies (\ref{qk+1>eqk}) and, for all positive $k$, the coefficients 
$c_k$ satisfy  
\begin{eqnarray}
c_k= c_{-k}, \quad |c_k|\leq C 
\end{eqnarray} 
for some positive constant $C$ which we may 
assume to be greater than $1$. 
We note that if $c_k$ is not bounded then $g(x)$ will not be $L^2$. 
Now what we want to estimate is essentially   
\begin{eqnarray}\label{defSN}
S(N):=\sum_{n\leq N} \mu(n) e\bigg(\sum_{j=0}^{n-1}h(j\a)\bigg), 
\end{eqnarray}
and our result is the following. 

\begin{proposition}\label{thm:2-2}
Let $S(N)$ be as in (\ref{defSN}). Then
\begin{eqnarray}\label{S2est}
S(N)\ll_{} N\log^{-A}N
\end{eqnarray}
where the implied constant depends on $A$, 
but is independent of $\a$.  
\end{proposition}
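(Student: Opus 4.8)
The plan is to exploit the coboundary structure of $h$, which collapses the inner sum in (\ref{defSN}). Summing the geometric progression $\sum_{j=0}^{n-1}e(q_kj\alpha)$ frequency by frequency and using $1-e(q_k\alpha)=-(e(q_k\alpha)-1)$, one obtains the telescoping identity
\begin{equation*}
\sum_{j=0}^{n-1}h(j\alpha)=\sum_{k\neq 0}c_k\bigl(1-e(q_kn\alpha)\bigr)=:G_n.
\end{equation*}
Writing $\delta_k=q_k\alpha-l_k$, so that $e(q_kn\alpha)=e(n\delta_k)$ and $|1-e(n\delta_k)|\leq 2\pi n|\delta_k|\leq 2\pi n/q_{k+1}$ by (\ref{ratAPP}), the series defining $G_n$ converges absolutely. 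Hence $F(n):=e(G_n)$ satisfies $|F(n)|=1$ and $S(N)=\sum_{n\leq N}\mu(n)F(n)$, and the task becomes one of detecting cancellation of $\mu$ against the bounded sequence $F$.

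Next I would truncate $G_n$ to a manageable number of frequencies. Taking $K$ to be the least index with $q_{K+1}>N^2$, the super-exponential growth (\ref{qk+1>eqk}) makes the tail $\sum_{|k|>K}c_k(1-e(n\delta_k))$ of size $O(Cn/q_{K+1})=O(C/N)$ for $n\leq N$, so that replacing $G_n$ by $G_n^{\leq K}=\sum_{0<|k|\leq K}c_k(1-e(n\delta_k))$ alters $S(N)$ by only $O(C)$. Since $q_k\geq 2^{(k-1)/2}$, this truncation retains only $K=O(\log N)$ frequencies.

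It is tempting to expand $F$ in a Fourier series in $n$ and invoke Davenport's uniform bound \cite{Dav} for $\sum_{n\leq N}\mu(n)e(n\beta)$, but this fails: the Wiener ($\ell^1$) norm of $e(-\sum_{0<|k|\leq K}c_ke(n\delta_k))$ is at most $\prod_{0<|k|\leq K}e^{2\pi|c_k|}\leq e^{4\pi CK}=N^{O(C)}$, which overwhelms the savings. I would therefore apply the Bourgain--Sarnak--Ziegler criterion \cite{BouSarZie}, which reduces the desired bound $\sum_{n\leq N}\mu(n)F(n)\ll N\log^{-A}N$ to showing that the bilinear sums
\begin{equation*}
B(p,p')=\sum_{n\leq N}F(pn)\overline{F(p'n)}=\sum_{n\leq N}e\bigl(\Phi(n)\bigr),\qquad \Phi(n)=\sum_{0<|k|\leq K}c_k\bigl(e(p'n\delta_k)-e(pn\delta_k)\bigr),
\end{equation*}
are $o(N)$ uniformly over distinct primes $p,p'\leq P$, with $P$ an appropriate function of $N$; note that the constant parts of $G_{pn}$ and $G_{p'n}$ cancel, leaving the absolutely convergent phase $\Phi$.

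The heart of the argument, and the main obstacle, is the bilinear estimate for $B(p,p')$. Here $\Phi$ is a real-analytic phase whose oscillation over $[1,N]$ is organized by the super-lacunary scales $\delta_k$: only the scales with $q_{k+1}\leq N$ oscillate, and among these the top scale $k^{*}$ (the largest $k$ with $q_{k+1}\leq N$) dominates, the ratios $\delta_k/\delta_{k+1}\approx q_{k+2}/q_{k+1}$ being enormous. The distinctness $p\neq p'$ prevents the leading frequencies $p\delta_{k^{*}}$ and $p'\delta_{k^{*}}$ from coinciding, so that $\Phi$ genuinely oscillates, and I would extract cancellation by van der Corput's method (or the Kusmin--Landau inequality), together with Poisson summation and stationary phase to treat the lower scales as a slowly varying amplitude. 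The delicate point is uniformity: the estimate must hold for every $\alpha$ at once, so one may assume no lower bound on the partial quotients, and the interaction of $p,p'$ with the several super-lacunary scales, as well as the dependence of the savings on $P$, must be controlled carefully enough to recover the clean rate $N\log^{-A}N$ after summation over $p,p'$ in the Bourgain--Sarnak--Ziegler inequality.
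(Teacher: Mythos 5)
Your opening reductions are sound (the telescoping identity and the fact that the tail of the series is negligible), but the proposal goes wrong at precisely the point where the hypothesis (\ref{qk+1>eqk}) must be used, and the substitute route you offer is not carried out. The frequency count ``$K=O(\log N)$'' is the error: you invoke only the generic lower bound $q_k\geq 2^{(k-1)/2}$, which holds for every irrational $\alpha$, and ignore that (\ref{qk+1>eqk}) forces tower-of-exponentials growth, $q_k\geq \exp\cdots\exp(q_2)$ with $k-2$ iterated exponentials. This has two consequences. First, one may truncate much earlier than $q_{K+1}>N^2$: as soon as $q_k\geq 2\log N$, the approximation (\ref{ratAPP}) combined with (\ref{qk+1>eqk}) gives $\|q_k\alpha\|\leq 1/q_{k+1}\leq e^{-q_k}\leq N^{-2}$, so all such frequencies together contribute $O(C/N)$. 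Second, and decisively, the number of surviving frequencies is then of \emph{iterated-logarithm} size in $N$ (essentially $\log^* N$), not $O(\log N)$. With that count, the approach you dismiss as ``tempting'' but failing is in fact the proof: expanding $e\left(2c_k\cos(2\pi x)\right)$ into its Fourier series one frequency at a time, each factor has $\ell^1$-norm $O(C^2)$ (two integrations by parts give coefficients $\ll C^2/l^2$), so the accumulated loss over all frequencies is $C^{O(K)}\leq\log N$ rather than your $N^{O(C)}$; what remains is a single linear phase $n(l_1\theta_1+\cdots+l_{K-1}\theta_{K-1})$, to which the classical bound $\sum_{n\leq N}\mu(n)e(n\beta)\ll_A N\log^{-A}N$, uniform in $\beta$ (\cite{Dav}, \cite{Hua}), applies directly and yields (\ref{S2est}). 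This is the paper's argument; it is elementary exactly because the Furstenberg condition on $\alpha$ is so strong.

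The fallback you propose does not close the gap. You yourself label the bilinear estimate for $B(p,p')$ ``the heart of the argument, and the main obstacle,'' and then only list possible tools (van der Corput, Kusmin--Landau, stationary phase) while conceding that the uniformity in $\alpha$ and in $p,p'$ is unresolved; nothing is proved. Moreover, even granting those estimates, the criterion of \cite{BouSarZie} converts bilinear cancellation of quality $\epsilon N$, required uniformly over distinct primes up to roughly $e^{1/\epsilon}$, into a final bound of order $\sqrt{\epsilon\log(1/\epsilon)}\,N$; to reach $N\log^{-A}N$ for \emph{every} $A$ one would need $\epsilon\approx\log^{-2A}N$ with uniformity over primes up to $\exp(\log^{2A}N)$, a far stronger demand than the $o(N)$ you state, with all implied constants controlled simultaneously. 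That machinery is what the paper uses for Theorem~\ref{thm2}, where no rate is obtained; the entire point of Proposition~\ref{thm:2-2} is that for Furstenberg's systems the direct Fourier-analytic route gives a rate. As it stands, your proposal does not prove the proposition.
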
 

\begin{proof} 
By (\ref{def/hx=sum}) we have 
\begin{eqnarray*}
\sum_{j=0}^{n-1}h(j\a)
=\sum_{k\not=0} c_k (1-e(q_k\a))\sum_{j=0}^{n-1}e(q_k j\a) 
=\sum_{k\not=0} c_k (1-e(n q_k\a)).  
\end{eqnarray*}
We should cut the last sum at some point. Let $K$ be such that 
$q_{K-1}< 2\log N \leq q_{K}$. 
Then for $k\geq K$ we have 
\begin{eqnarray}\label{qk-lk}
|q_k\a-l_k|
\leq \f{1}{q_{k+1}} 
\leq e^{-q_k}, 
\end{eqnarray} 
so that 
\begin{eqnarray*}
\sum_{|k|\geq K} |c_k (1-e(n q_k\a))| 
\ll C\sum_{|k|\geq K} n e^{-q_{k}}  
\ll  CN e^{-q_{K}}  
\ll  CN^{-1}  
\end{eqnarray*} 
where we the implied constant does not depend on $C$. 
It follows that 
\begin{eqnarray*}
S(N)=\sum_{n\leq N} \mu(n) 
e\bigg\{\sum_{1\leq |k|\leq K-1} c_k (1-e(n q_k\a))+O\bigg(\f{C}{N}\bigg)\bigg\}. 
\end{eqnarray*}
The $O$-term as well as $\sum\limits_{1\leq |k|\leq K-1} c_k $ 
above are harmless, 
and so from now on we concentrate on 
\begin{eqnarray}\label{TilS=}
\widetilde{S}(N) 
&:=&\sum_{n\leq N} \mu(n) 
e\bigg\{\sum_{1\leq |k|\leq K-1} c_k e(n q_k\a)\bigg\} \nonumber\\ 
&=&\sum_{n\leq N} \mu(n) 
e\bigg\{\sum_{1\leq |k|\leq K-1} c_k e(n \th_k)\bigg\} 
\end{eqnarray}
on writing  $\th_k=\|q_k\a\|$. Note that by 
(\ref{qk-lk}) we have $\th_k\leq e^{-q_k}$.

We have a sequence 
$$
q_1<\exp(q_1) \leq q_2 < \exp(q_2) \leq q_3< \ldots,    
$$
and therefore  
$q_k> \exp \cdots\exp (q_2)$ 
with $k-2$ repeated $\exp$'s.  
Taking $k=K-1$ and noting that $q_2\geq 2$ 
give  
\begin{eqnarray}\label{repEXP}
\exp \cdots\exp (2) \leq \exp \cdots\exp (q_2) < q_{K-1}\leq 2\log N
\end{eqnarray}
where on the left-hand side there are $K-3$ $\exp$'s.  
 
Now let 
\begin{eqnarray}\label{DEFfi}
\phi(x)
= 2c_1\cos (2\pi x), \quad x\in [\th_1,\th_1 N]. 
\end{eqnarray}
Then $e(\phi(x))$ is a smooth periodic function  
and hence can expanded into 
Fourier series 
\bea 
e(\phi(x))=\sum_{l\in \Bbb Z} a_l(c_1)e(lx), 
\eea 
where 
\bea 
a_l(c_1)=\int_0^1 e(\phi(x))e(-lx)dx. 
\eea 
We must compute the dependence of $a_l(c_1)$ on $c_1$. 
By partial integration for $l\not=0$ we have 
\bna 
a_l(c_1)
&=& -\f{1}{2\pi i l}\int_0^1 e(\phi(x))d e(-lx) \\ 
&=& -\f{1}{l}\int_0^1 e(\phi(x))\phi'(x) e(-lx) dx\\ 
&=& \f{1}{2\pi i l^2}\int_0^1 \f{d[e(\phi(x))\phi'(x)]}{dx} e(-lx) dx. 
\ena 
Since 
\bna 
\phi'(x)= - 4\pi c_1\sin (2\pi x), \quad 
\phi''(x)= - 8\pi^2 c_1\cos (2\pi x), 
\ena 
we have 
\bna 
\bigg|\f{d[e(\phi(x))\phi'(x)]}{dx}\bigg|
=|e(\phi(x)) [\phi''(x)+2\pi i\phi'(x)\phi'(x)]|
\ll |c_1|+|c_1|^2\ll C^2
\ena 
and consequently 
\bea\label{al<<} 
a_l(c_1)
\ll \f{C^2}{l^2} 
\eea 
for all $l\not=0$. 
The above $\ll$-constant is absolute. Of course the above $l^2$ 
can be improved to $l^A$ for arbitrary $A>0$, but we are not going to 
use this. 

The sum $\widetilde{S}(N)$ in (\ref{TilS=}) is of the form 
\begin{eqnarray*}
\widetilde{S}(N) 
&=&\sum_{n\leq N} \mu(n)e(\phi(n\th_1)+F(n))\\ 
&=& \sum_{n\leq N} \mu(n) e(F(n))\sum_{l\in \Bbb Z} a_{l}(c_1) e(ln\th_1)\\ 
&=& \sum_{l\in \Bbb Z} a_{l}(c_1) 
\sum_{n\leq N} \mu(n) e(ln\th_1+F(n)), 
\end{eqnarray*} 
where $F(n)$ stands for the remaining terms in the $e(\cdot)$ in (\ref{TilS=}).  
It follows from this and (\ref{al<<}) that 
\begin{eqnarray*}
|\widetilde{S}(N)|  
&\leq & \sum_{l\in \Bbb Z} |a_{l}(c_1) | 
\bigg|\sum_{n\leq N} \mu(n) e(ln\th_1+F(n))\bigg|\\ 
&\ll & C^2 \sum_{l\in \Bbb Z}\f{1}{l^2+1}
\bigg|\sum_{n\leq N} \mu(n) e(ln\th_1+F(n))\bigg|\\ 
&\ll & C^2   
\sup_{l_1} \bigg|\sum_{n\leq N} \mu(n) e(n l_1\th_1 +F(n))\bigg|. 
\end{eqnarray*}
Repeating this procedure in (\ref{TilS=}), we get 
\begin{eqnarray*}
|\widetilde{S}(N)| 
\ll  C^{2(K-1)}  
\sup_{l_1, \ldots, l_{K-1}} \bigg|\sum_{n\leq N} 
\mu(n) e(n l_1\th_1+ \ldots+ n l_{K-1}\th_{K-1}) \bigg|.  
\end{eqnarray*}
The inner sum involving $\mu$ can be estimated by classical results of 
Davenport \cite{Dav} and Hua \cite{Hua} as 
$$
\ll N\log^{-A}N
$$ 
where $A>0$ is arbitrary, and the implied constant 
depends at most on $A$, i.e. it does not depend on the coefficients 
$l_1, l_2, \ldots, l_{K-1}$ or $\th_1, \th_2, \ldots, \th_{K-1}$. 
By (\ref{repEXP}) we have $C^{2(K-1)}\leq \log N$. The proposition is proved.   
\end{proof} 

\medskip 
\noindent 
{\bf Acknowledgements.} 
The first author is supported by the 973 Program, 
NSFC grant 11031004, and IRT1264 from the Ministry of Education. 
The second author is supported by an NSF grant.


\begin{thebibliography}{100}

\bibitem{Aok}
N. Aoki, 
Topological entropy of distal affine transformations on compact abelian groups, 
J. Math. Soc. Japan 23(1971), 11-17. 

\bibitem{Bou}
J. Bourgain, 
On the correlation of the M\"{o}bius function with random rank one systems (2011),  
ArXiv:1112.1031.  

\bibitem{BouSarZie}
J. Bourgain, P. Sarnak, and Ziegler, 
Disjointness of M\"obius from horocycle flows, {\it 
From Fourier analysis and number theory to radon transforms and geometry}, 
67-83, Dev. Math. 28, Springer, New York, 2013. 

\bibitem{Dan}
S. G. Dani, Dynamical systems on homogeneous spaces,
in Chapter 10, {\it Dynamical Systems, Ergodic Theory and Applications}, 
Encyclopedia of Mathematical Sciences, Vol. 100, Springer.

\bibitem{Dav}
H. Davenport, 
On some infinite series involving arithmetical functions II, 
Quart. J. Math. 8(1937), 313-350. 

\bibitem{Fur61}
H. Furstenberg, Strict ergodicity and transformation of the torus, Amer. J. Math.
83(1961), 573-601.

\bibitem{Fur63}
H. Furstenberg, The structure of distal flows, Amer. J. Math. 85(1963), 
477-515. 

\bibitem{GreTao1}
B. Green and T. Tao, The quatitative behaviour of polynomial obits on nilmanifolds,
Ann. Math. (2), 175(2012), 465-540.  

\bibitem{GreTao2}
B. Green and T. Tao, The M\"obius function is strongly orthorgonal to nilsequences,
Ann. Math. (2), 175 (2012), 541-566. 

\bibitem{Hah} 
F. J. Hahn, 
On affine transformations of compact abelian groups,  
Amer. J. Math. 85(1963), 428-446;  Errata: Amer. J. Math. 86(1964), 463-464. 

\bibitem{HoaPar} 
H. Hoare and W. Parry, 
Affine transformations with quasi-discrete spectrum I, 
J. London Math. Soc. 41(1966), 88-96. 

\bibitem{Hua} 
L. K. Hua, {\it Additive theory of prime numbers}, 
AMS Translations of Mathematical Monographs, Vol. 13,  Providence, R.I. 1965. 



\bibitem{LiuSar} 
J. Liu and P. Sarnak, 
The M\"obius function and distal flows, 
arXiv:1303.4957.   


\bibitem{MauRiv}
C. Mauduit and J. Rivat, 
Sur un probl\`{e}me de Gelfond: la somme des chiffres des nombres premiers, 
Ann. Math. (2) 171(2010), 1591-1646. 

\bibitem{SanUrz}
N. M. dos Santos and R. Urz\'{u}a-Luz, 
Minimal homeomorphisms on low-dimensional tori, 
Ergodic Theory Dynam. Systems 29(2009), 1515-1528. 
 
\bibitem{Sar}
P. Sarnak, {\it Three lectures on the M\"obius function, randomness
and dynamics}, IAS Lecture Notes, 2009; http://publications.ias.edu/sites/default/files/MobiusFunctionsLectures(2).pdf. 

\bibitem{Sar1}
P. Sarnak, M\"{o}bius randomness and dynamics, 
Not. S. Afr. Math. Soc. 43 (2012), 89-97. 

\bibitem{SarUbi}
P. Sarnak and A. Ubis, The horocycle at prime times, 
ArXiv:1110.0777v2. 
\end{thebibliography}
\end{document}